\newcommand{\F}{\mathcal{F}}
\newcommand{\RR}{\mathbb{R}}
\newcommand{\tp}{^{\mathsf{T}}}
\newcommand{\mtp}{^{-\mathsf{T}}}
\newcommand{\ee}{\mathrm{e}}
\DeclareMathOperator*\esssup{\mathrm{ess\,sup}}
\DeclareMathOperator*\essran{\mathrm{ess\,ran}}
\DeclareMathOperator\Grad{\mathrm{grad}}
\DeclareMathOperator\Ran{\mathrm{Ran}}
\newtheorem{theorem}{Theorem}[section]
\newtheorem{lemma}[theorem]{Lemma}
\newtheorem{remark}[theorem]{Remark}
\numberwithin{equation}{section}
\author{D. R. Michiel Renger and Stefanie Schindler}
\title{Gradient flows for bounded linear evolution equations}
\date{\today}
\begin{document}
\maketitle

\begin{abstract}
We study linear evolution equations in separable Hilbert spaces defined by a bounded linear operator. We answer the question which of these equations can be written as a gradient flow, namely those for which the operator is real diagonalisable. The proof is constructive, from which we also derive geodesic lambda-convexity.
\end{abstract}
	
\section{Introduction}

A great amount of literature is dedicated to proving that certain evolution equations can be written as a gradient flow, see for example~\cite{JKO1998,Maas2011,Mielke2013,CancesMatthesNabet2019,Bartha2012}. Such results are important since a gradient flow structure shows that the equation is consistent with thermodynamic principles~\cite{Ottinger2005}, it facilitates the use of additional tools to prove existence, stability and convergence to equilibrium~\cite{AGS2008}, and it opens up the possibility of using certain numerical schemes~\cite{CarilloMatthesWolfram2020}. 

In~\cite{Bartha2012}, it is proven that every ordinary differential equation on a finite-dimensional manifold is a gradient flow if it possesses a strict Lyapunov function. Here in this paper, we answer the question of which linear equations of the form
\begin{equation}
  \dot x(t) = A x(t)
\label{eq:linear evolution}
\end{equation}
in a possibly infinite-dimensional separable Hilbert space $H$ can be written as a gradient flow if the evolution operator $A:H\to H$ is linear and bounded.

\paragraph{Gradient systems.}%
In this section, we give a short introduction to gradient systems in Hilbert spaces, where we follow the notation of \cite{Mielke2013}. For a basic and more extensive introduction to this topic, we refer to \cite{Chill2010}.

Since our results will be based on spectral theory we restrict our analysis to separable Hilbert spaces; for brevity we identify $H^*$ with $H$.
Throughout this paper a \emph{gradient system} will be a triple $(H,K,\F)$ where $H$ is a separable Hilbert space, $K(x):H\to H$ is a positive definite symmetric operator and $\F:H\to\RR$. 
For ease of presentation we implicitly assume that $K$ is continuous G{\^a}teaux-differentiable and $\F$ is twice continuous G{\^a}teaux-differentiable. Such $K$ and $\F$ satisfying these properties will be called an \emph{Onsager operator} and a \emph{free energy}, respectively. The aim is to find, given operator $A$ and Hilbert space $H$, a gradient system $(H,K,\F)$ so that $Ax=-K(x)D\F(x)$ for all $x\in H$. For such a gradient system one can rewrite \eqref{eq:linear evolution} as $\dot x(t) = -K(x(t))D\F(x(t))=-\Grad_{x(t)}\F(x(t))$ on the manifold defined by the corresponding metric:
\begin{align}
  d(x_1,x_2)^2 := \inf_{\gamma\in\Gamma(x_1,x_2)} \left\lbrace \int_0^1\!\Big\lbrack \sup_{\xi\in H}\big\langle \xi,2\dot\gamma(s)-K(x)\xi\big\rangle_H \Big\rbrack\,\mathrm{d}s\right\rbrace,
\label{eq: def of d_K}
\end{align}
with $\Gamma(x_1,x_2)$ denoting the space of all curves $\gamma \in C^1([0,1],H)$ connecting $x_1$ and $x_2$.

In the special case where the Onsager operator $K(x) \equiv K$ is independent of $x$ and the driving functional is quadratic, i.e. $\F(x)=\frac{1}{2}\langle B(x-\pi),x-\pi\rangle_H$ for some symmetric bounded linear operator $B: H \to H$ and equilibrium point $\pi\in H$, we shall call $(H,K,\F)$ a \emph{canonical gradient system}. 

More generally, we also allow the quadratic form $\tfrac12\langle \xi, K(x)\xi\rangle$ to be replaced by a more general $\Psi^*(x,\xi)$ that is strictly convex in the second argument and satisfies $\inf_{\xi\in H}\Psi^*(x,\xi)=\Psi^*(x,0)=0$. Such $\Psi^*$ is called a \emph{dissipation potential}, and we implicitly assume that $\Psi^*$ is continuous G{\^a}teaux-differentiable in the first argument and twice in the second. We then call $(H,\Psi^*,\F)$ a \emph{generalised gradient system} if $\Psi^*:H\times H\to\RR$ is a dissipation potential and $\F:H\to\RR$ is a free energy. Similar as before, we shall assume that besides $\Psi^*$ also $\F$ is twice continuous G{\^a}teaux-differentiable. The evolution equation corresponding to the flow of $(H,\Psi^*,\F)$ is $\dot x(t)=D_{\xi}\Psi^*\big(x(t),-D\F(x(t))\big)$, thus giving rise to a possible nonlinear relation between `driving forces' and `velocities'. These structures go back at least to~\cite{DeDonder1936}, and arise in a large variety of physical and mathematical problems, see \cite{MielkePeletierRenger2014,PRST2020} and the references therein.

There are good reasons to believe that symmetry of $A$ (i.e. reversibility of generators in the setting of Markov processes) is a sufficient condition for the evolution~\eqref{eq:linear evolution} to be a gradient flow equation: the foundations of this argument trace back to Boltzmann's H-Theorem~\cite{Boltzmann1872} and Onsager's reciprocity relations~\cite{Onsager1931I}, but can also be motivated in relation to large-deviation theory~\cite{MielkePeletierRenger2014}. Perhaps surprisingly, it turns out that the class of equations that are the flow of a gradient system is much bigger, namely those equations for which $A$ is real diagonalisable. 
We illustrate this result in the following paragraph.

\paragraph{Finite-state Markov chains.}  Consider the Kolmogorov equation of a finite-state continuous-time {M}arkov chain; in that case $A\in\RR^{d\times d}$ is a (transposed) generator matrix such that the evolution~\eqref{eq:linear evolution} preserves non-negativity and total probability. The discovery of \cite{Maas2011} and \cite{Mielke2013} showed that reversible Markov chains are the flow of a system driven by the relative entropy with respect to the invariant measure. For example if we define
\begin{align*}
  A&:=
  \begin{bmatrix}
    -2 & 1 & 1\\
    1 & -2 & 1\\
    1 & 1 & -2
  \end{bmatrix},
  \hspace{3cm}
  \F(x):=\sum_{i=1}^3 x_i\log 3x_i,\\
  K(x)&:=\frac13
  \begin{bmatrix}
    \sum_{j\neq 1} \Lambda(3x_1,3x_j) & -\Lambda(3x_1,3x_2) & -\Lambda(3x_1,3x_3)\\
    -\Lambda(3x_1,3x_2) & \sum_{j\neq 2} \Lambda(3x_2,3x_j) & -\Lambda(3x_1,3x_3)\\
    -\Lambda(3x_1,3x_3) & -\Lambda(3x_2,3x_2) & \sum_{j\neq 3} \Lambda(3x_3,3x_j)
  \end{bmatrix},
\end{align*}
where $\Lambda(a,b):=(a-b)/(\log a-\log b)$ is the logarithmic mean, then $K(x)$ is positive definite and symmetric, and indeed $Ax=-K(x)D\F(x)$.

However, \cite{Dietert2015} showed that reversibility is only a necessary assumption \emph{if} the driving functional $\F$ is the relative entropy; when allowing more general functionals, then the existence of a gradient system whose flow is the Kolmogorov forward equation~\eqref{eq:linear evolution} is equivalent to real diagonalisability of $A$. As an illustration, the Markov chain with the following transposed generator is not reversible but real diagonalisable, and it is the flow of the following (canonical) gradient system:
\begin{align*}
  A&:= 
    \begin{bmatrix}
      -2 & 0 & 2\\
      1 & -3 & 2\\
      1 & 3 & -4
    \end{bmatrix},
  \hspace{3cm}
  \F(x):= 
    \frac16 x\tp
    \begin{bmatrix}
      4 & -4 & 0 \\
     -4 &  6 & -2\\
      0 & -2 & 2
    \end{bmatrix}
    x,\\
  K(x)&:\equiv 
    \begin{bmatrix}
      3 & 3/2 & -3/2\\
      3/2 & 9/4 & -3/4\\
      -3/2 & -3/4 & 21/4
    \end{bmatrix}.
\end{align*}

\paragraph{Bounded linear operators on a separable Hilbert space.}
In this work we generalise the result of~\cite{Dietert2015} to linear equations~\eqref{eq:linear evolution} on a separable Hilbert space, not necessarily conserving non-negativity and total mass. We further generalise by allowing generalised gradient structures as well. Apart from assuming boundedness of the operator $A$ we also require surjectivity. The surjectivity is motivated by the fact that solutions to the linear equation~\eqref{eq:linear evolution} are contained in $\Ran(A)$ modulo the initial condition. Hence if $A$ is not surjective we can always restrict it -- without too much loss of generality -- to the Hilbert space $\Ran(A)$.  

Our first main result is the following:

\begin{theorem} Let $H$ be a separable Hilbert space, and $A:H\to H$ a surjective bounded linear operator. Then the following statements are equivalent:
\begin{enumerate}[(i)]
\item\label{th it: ggs} $Ax=D_\xi\Psi^*\big(x,-D\F(x)\big)$ for some generalised gradient system $(H,\Psi^*,\F)$ with some equilibrium $\pi\in H$, i.e. $D\F(\pi)=0$,
\item\label{th it: gs} $Ax=-K(x)D\F(x)$ for some gradient system $(H,K,\F)$ with some equilibrium $\pi\in H$,
\item\label{th it: can gs} $Ax= - KD\F(x)$ for some canonical gradient system $(H,K,\F)$ with some equilibrium $\pi\in H$,
\item\label{th it: real diag} $A$ is real diagonalisable, i.e. there exist:
\begin{enumerate}[(1)]
\item a measure space $(\Omega, \mathcal{A}, \mu)$,
\item an essentially bounded, real-valued $f\in L^\infty_\mu(\Omega)$ with corresponding bounded linear multiplication operator $M_f:L^2_{\mu}(\Omega)\to L^2_{\mu}(\Omega)$ defined by
\begin{align*}
  M_f(\varphi)(\omega) := \varphi(\omega) f(\omega)  &&\text{for }\mu\mathrm{-a.e.}\; \omega \in \Omega,
\end{align*}
\item an invertible bounded linear $V:H\to L^2_{\mu}(\Omega)$ so that \footnote{Note that $V$ does not need to be unitary, as often required in the literature~\cite[Sec.~3.6]{Kubrusly2012}.}
\begin{align*}
  A =V^{-1} M_f V.
\end{align*}
\end{enumerate}
\end{enumerate}
\label{th:main result}
\end{theorem}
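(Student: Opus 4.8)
The natural strategy is to establish the chain of implications
$\ref{th it: can gs}\Rightarrow\ref{th it: gs}\Rightarrow\ref{th it: ggs}\Rightarrow\ref{th it: real diag}\Rightarrow\ref{th it: can gs}$.
The first two implications are essentially trivial: a canonical gradient system is a gradient system (take $K(x)\equiv K$ and $\F$ quadratic), and a gradient system is a generalised gradient system (take $\Psi^*(x,\xi)=\tfrac12\langle\xi,K(x)\xi\rangle$, whose $\xi$-derivative is $K(x)\xi$). So only the last two implications carry content, and the bulk of the work is the constructive direction $\ref{th it: real diag}\Rightarrow\ref{th it: can gs}$.

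\textbf{From a generalised gradient system to real diagonalisability.} Suppose $Ax=D_\xi\Psi^*(x,-D\F(x))$ with equilibrium $\pi$. Evaluating at $x=\pi$ gives $A\pi=D_\xi\Psi^*(\pi,0)=0$ since $\Psi^*(\pi,\cdot)$ is minimised at $0$. Linearising the flow around $\pi$: differentiating the identity in $x$ at $x=\pi$ and using $D\F(\pi)=0$ yields $A = -D^2_{\xi\xi}\Psi^*(\pi,0)\,D^2\F(\pi)$, i.e. $A=-G\,B$ where $G:=D^2_{\xi\xi}\Psi^*(\pi,0)$ is symmetric positive definite (by strict convexity of the dissipation potential at its minimum) and $B:=D^2\F(\pi)$ is symmetric. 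Since $A$ is surjective and $G$ is invertible, $B$ is surjective, hence (being symmetric and bounded) boundedly invertible. Then $A=-GB$ is similar to the self-adjoint operator $-B^{1/2}GB^{1/2}$ via the bounded invertible transformation $x\mapsto B^{1/2}x$; by the spectral theorem (multiplication-operator form) for bounded self-adjoint operators on a separable Hilbert space, $-B^{1/2}GB^{1/2}=U^{-1}M_fU$ for a unitary $U$ and real $f\in L^\infty_\mu(\Omega)$, and composing gives $A=V^{-1}M_fV$ with $V:=UB^{1/2}$ bounded and invertible (but not unitary, as the footnote warns).

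\textbf{From real diagonalisability to a canonical gradient system.} This is the crux, and I expect it to be the main obstacle, because one must manufacture a \emph{positive definite} $K$ and a free energy $\F$ from the given data $A=V^{-1}M_fV$ with \emph{no sign control} on $f$. The idea: write $A=-KB$ with $B$ symmetric and $K$ symmetric positive definite. A symmetrising inner product for $A$ is furnished by $V$: set $S:=V^*V$, which is symmetric positive definite and bounded invertible, and note $SA=V^*M_fV=(SA)^*$ (using that $M_f$ is self-adjoint on $L^2_\mu$), so $SA$ is symmetric. The obstruction is that $-SA$ need not be positive. To fix the sign, I would split according to the sign of $f$: let $P$ be the multiplication operator by $\mathrm{sign}(f)$ (a bounded self-adjoint involution commuting with $M_f$), pull it back to $R:=V^{-1}PV$, and use $|A|$-type data built from $M_{|f|}$. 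Concretely, set $B:=V^*M_{\mathrm{sign}(f)}V$ (symmetric, boundedly invertible since $\mathrm{sign}(f)$ is bounded away from $0$ on $\mathrm{supp}f=\Omega$ after restricting) and $K:=V^{-1}M_{|f|}V^{-*}$, which is symmetric and positive definite because $M_{|f|}\ge 0$ is bounded, though one must check $|f|$ is bounded below to get a genuine positive-\emph{definite} (not merely nonnegative) $K$ — surjectivity of $A$ forces $f\ne 0$ a.e. but not a uniform lower bound, so some care (or a restriction/approximation argument) is needed here. Then $-KB = -V^{-1}M_{|f|}M_{\mathrm{sign}(f)}V = -V^{-1}M_{f}V\cdot(-1)\,$— sign bookkeeping to be done carefully — recovers $\pm A$; after fixing orientations one takes $\F(x):=\tfrac12\langle B(x-\pi),x-\pi\rangle_H$ with $\pi$ any point in $\ker B=\ker A$ (nonempty: $0\in\ker A$ by surjectivity? — actually one should take $\pi\in\ker A$, and $\ker A=\ker M_f\circ V$ may be $\{0\}$, in which case $\pi=0$ works and $D\F(0)=0$). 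Verifying $Ax=-KD\F(x)=-KB(x-\pi)$ and that $(H,K,\F)$ is a canonical gradient system then completes the cycle. The delicate points — ensuring $K$ is positive \emph{definite} rather than merely nonnegative when $\essinf|f|=0$, and handling $\ker A$ to locate the equilibrium $\pi$ — are where I expect the real technical effort to lie; the remark in the abstract about geodesic $\lambda$-convexity suggests the authors then read off $\lambda$ from $\essinf f$ (or $\esssup f$) along the constructed metric \eqref{eq: def of d_K}.
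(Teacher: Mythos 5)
Your overall architecture --- linearise at the equilibrium to collapse \emph{(i)}--\emph{(iii)} into the canonical case, then shuttle between \emph{(iii)} and \emph{(iv)} via a square root and the multiplication-operator form of the spectral theorem --- is the paper's, but two steps are genuinely broken. In \emph{(i)}$\Rightarrow$\emph{(iv)} you conjugate $A=-GB$ by $B^{1/2}$ with $B=D^2\F(\pi)$. A free energy is only assumed twice G\^ateaux-differentiable; no convexity is imposed, so $B$ is symmetric but in general indefinite and $B^{1/2}$ need not exist. Already $H=\RR$, $\Psi^*(x,\xi)=\tfrac12\xi^2$, $\F(x)=-\tfrac12x^2$ gives $A=1$ and $B=-1$, and your conjugation is unavailable while $A$ is trivially real diagonalisable. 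The operator that \emph{is} guaranteed positive definite is $G=D^2_\xi\Psi^*(\pi,0)$, and that is what the paper conjugates by: $\bar A:=\sqrt{G}^{-1}A\sqrt{G}=-\sqrt{G}B\sqrt{G}$ is symmetric, and the spectral theorem then applies. (Relatedly, you assert $G$ is invertible; in infinite dimensions positive definiteness alone does not give a bounded inverse --- invertibility of $\sqrt{G}$ has to be extracted from $H=\Ran(A)\subset\Ran(\sqrt{G})$, which is exactly how the paper uses the surjectivity hypothesis.)

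In \emph{(iv)}$\Rightarrow$\emph{(iii)} your sign-splitting of $f$ is both unnecessary and the source of every gap you flag. The premise that one must build a positive definite $K$ ``with no sign control on $f$'' conflates two positivity requirements: only the Onsager operator $K$ must be positive definite, whereas the quadratic free energy $\F(x)=\tfrac12\langle Bx,x\rangle_H$ of a canonical gradient system may be indefinite. The paper therefore takes $K:=V^{-1}V\mtp$, which is positive definite independently of $f$ because $\langle\xi,K\xi\rangle_H=\lVert V\mtp\xi\rVert^2_{L^2_\mu(\Omega)}>0$ for $\xi\neq0$, and absorbs all of $f$, sign included, into $\F(x):=-\tfrac12(Vx,M_fVx)_{L^2_\mu(\Omega)}$; then $-KD\F(x)=V^{-1}V\mtp V\tp M_fVx=V^{-1}M_fVx=Ax$, with $\pi=0$. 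This makes your worries about $\essinf|f|$, the orientation of $\mathrm{sign}(f)$, and $\ker A$ moot. (Your own construction could in fact be rescued: surjectivity of $A$ forces $M_f$ to be surjective, hence $|f|$ is essentially bounded below and $M_{|f|}$ is boundedly invertible --- but none of that machinery is needed.)
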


We postpone the proof of this result to Section~\ref{sec:gs and diag}. Let us mention here that the proof of \emph{(iv)} $\implies$ \emph{(iii)} will be based on the explicit construction
\begin{align}
  \F(x):=-\mfrac12 (Vx,M_fVx)_{L^2_\mu(\Omega)} &&\text{and}&&  K:= V^{-1}V\mtp,
\label{eq:construction FK}
\end{align}
where $V\tp: L^2_{\mu}(\Omega) \to H$ is the adjoint of $V$, and $V\mtp$ is the adjoint of the inverse or equivalently the inverse of the adjoint. 

Recall from the spectral theorem that any symmetric bounded operator $A$ is real diagonalisable. Hence symmetry is indeed a sufficient condition for the evolution equation~\eqref{eq:linear evolution} to be a gradient flow, which is consistent with Onsager theory as mentioned above. However, our Theorem~\ref{th:main result} shows that the necessary condition is not symmetry but real diagonalisability.

As a byproduct of Theorem~\ref{th:main result}, we derive geodesic $\lambda$-convexity of the constructed gradient structure~\eqref{eq:construction FK}.
The relevance of this property lies in the fact -- among other facts -- that it implicates a Lipschitz continuous dependence of the solutions on their initial data by the inequality, using the metric~\eqref{eq: def of d_K}, see for example ~\cite[Th.~4.0.4]{AGS2008}, \cite[Th.~2.6]{daneri2010lecture} 
\begin{equation*}
  d(x_1(t),x_2(t)) \leq \ee^{-\lambda t} d(x_1(0),x_2(0)), \quad  \text{for all } t \geq 0. 
\end{equation*}
When $\lambda \geq 0$, this yields a contraction semigroup, and in the special case $\lambda >0$, one even gains exponential decay towards the unique equilibrium state $\pi$. This provides a new perspective to the classical argument that convergence speed to equilibrium is related to the spectral gap of the operator. 

\begin{theorem}
\label{th:geodesic convexity}
Let $H$ be a separable Hilbert space and $A:H\to H$ a surjective, bounded, linear and real diagonalisable operator $A = V^{-1} M_f V$,
and let $(H, K,\F)$ be the canonical gradient system~\eqref{eq:construction FK}. Then $\F$ is geodesically $\lambda$-convex w.r.t. $d$ with
\begin{equation*}
  \lambda:= - \esssup_{\omega \in \Omega} f(\omega) \, c_V,
\end{equation*}
where the constant $c_V$ is given by 
\begin{align*}
 c_V := \left\{\begin{array}{ll}  \Vert V^{-1} \Vert_{L( L^2_\mu(\Omega),H)}^2 \Vert V \Vert_{L(H,  L^2_\mu(\Omega))}^2, & \text{if } \esssup_{\omega \in \Omega} f(\omega) \geq 0,  \\
        \Vert V^{-1} \Vert_{L( L^2_\mu(\Omega),H)}^{-2} \Vert V \Vert_{L(H,  L^2_\mu(\Omega))}^{-2}, & \text{else.} \end{array}\right.
\end{align*}

If we additionally assume that the measure space $(\Omega,\mathcal{A},\mu)$ associated to the diagonalisability of $A$ is finite and the spectrum satisfies $\sigma(A) \subset (-\infty, 0]$  
then $f\leq0$ $\mu$-a.e. and so $\lambda \geq 0$.
\end{theorem}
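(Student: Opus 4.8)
The plan is to exploit that, for the \emph{canonical} gradient system~\eqref{eq:construction FK}, the Onsager operator $K = V^{-1}V\mtp$ is independent of $x$, so the metric $d$ in~\eqref{eq: def of d_K} is flat and explicit. First I would carry out the supremum in~\eqref{eq: def of d_K}: for constant $K$ the maximiser is $\xi = K^{-1}\dot\gamma(s)$ and the integrand collapses to $\langle K^{-1}\dot\gamma(s),\dot\gamma(s)\rangle_H$; since $K^{-1} = V\tp V$ this equals $\|V\dot\gamma(s)\|_{L^2_\mu(\Omega)}^2$. A Cauchy--Schwarz argument along curves then gives $d(x_0,x_1)^2 = \|V(x_1-x_0)\|_{L^2_\mu(\Omega)}^2$, with the infimum attained along the straight segment $\gamma(s)=(1-s)x_0+sx_1$; because $L^2_\mu(\Omega)$ is a Hilbert space these segments are exactly the constant-speed $d$-geodesics. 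In particular one records the comparison $\|V^{-1}\|^{-1}\|x_1-x_0\|_H \le d(x_0,x_1) = \|V(x_1-x_0)\|_{L^2_\mu(\Omega)} \le \|V\|\,\|x_1-x_0\|_H$, with $\|\cdot\|$ denoting the relevant operator norms.

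Next I would evaluate $\F$ along such a geodesic. Writing $v:=x_1-x_0$ and using $\F(x) = -\tfrac12\langle V\tp M_f V x, x\rangle_H$ from~\eqref{eq:construction FK}, the map $s\mapsto\F(\gamma(s))$ is a parabola, with constant second derivative
\begin{align*}
  \frac{\mathrm{d}^2}{\mathrm{d}s^2}\F(\gamma(s)) = -\langle V\tp M_f V v, v\rangle_H = -\big(M_f Vv, Vv\big)_{L^2_\mu(\Omega)} = -\int_\Omega f(\omega)\,\big|(Vv)(\omega)\big|^2\,\mathrm{d}\mu(\omega).
\end{align*}
For a parabola, the defining inequality of geodesic $\lambda$-convexity along $\gamma$ is equivalent to $\frac{\mathrm{d}^2}{\mathrm{d}s^2}\F(\gamma(s)) \ge \lambda\, d(x_0,x_1)^2$; so the task reduces to bounding the displayed expression from below by $\lambda\,d(x_0,x_1)^2$ for every $v\in H$.

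The estimate starts from $f\le\esssup_\omega f(\omega)=:f_*$ $\mu$-a.e., which gives $-\int_\Omega f\,|Vv|^2\,\mathrm{d}\mu \ge -f_*\,\|Vv\|_{L^2_\mu(\Omega)}^2$, and then converts $\|Vv\|_{L^2_\mu(\Omega)}^2$ into $d(x_0,x_1)^2$ through the two-sided comparison above, passing through $\|v\|_H$. If $f_*\ge0$, then $-f_*\le0$ and one bounds $\|Vv\|^2$ from \emph{above}: $\|Vv\|^2\le\|V\|^2\|v\|_H^2\le\|V\|^2\|V^{-1}\|^2 d(x_0,x_1)^2$, which produces $c_V=\|V^{-1}\|^2\|V\|^2$. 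If $f_*<0$, then $-f_*>0$ and one bounds $\|Vv\|^2$ from \emph{below}: $\|Vv\|^2\ge\|V^{-1}\|^{-2}\|v\|_H^2\ge\|V^{-1}\|^{-2}\|V\|^{-2}d(x_0,x_1)^2$, which produces $c_V=\|V^{-1}\|^{-2}\|V\|^{-2}$. In either case $\frac{\mathrm{d}^2}{\mathrm{d}s^2}\F(\gamma(s)) \ge -f_*\,c_V\,d(x_0,x_1)^2 = \lambda\,d(x_0,x_1)^2$, which is the claimed geodesic $\lambda$-convexity. The one genuinely delicate point is this bookkeeping: every comparison between $\|\cdot\|_{L^2_\mu(\Omega)}$, $\|\cdot\|_H$ and $d$ must be used in the direction in which it is a true inequality, and the sign of $f_*$ decides which powers of $\|V\|$ and $\|V^{-1}\|$ remain — precisely the case split defining $c_V$.

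For the final assertion, observe that $A = V^{-1}M_f V$ is similar to the multiplication operator $M_f$, so $\sigma(A)=\sigma(M_f)$; and over a finite (hence $\sigma$-finite) measure space $\sigma(M_f)$ equals the essential range $\essran f$. Hence $\sigma(A)\subset(-\infty,0]$ forces $\essran f\subset(-\infty,0]$, which is equivalent to $f\le 0$ $\mu$-a.e.; in particular $f_*=\esssup_\omega f(\omega)\le 0$, so $\lambda=-f_*\,c_V\ge 0$ because $c_V>0$.
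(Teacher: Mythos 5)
Your proposal is correct and follows essentially the same route as the paper: an explicit computation of the flat metric $d(x_0,x_1)^2=\lVert V(x_1-x_0)\rVert_{L^2_\mu(\Omega)}^2$ with straight-line geodesics, the bound $-\int_\Omega f\,|Vv|^2\,\mathrm{d}\mu\ge -\esssup f\,\lVert Vv\rVert^2$, the same sign-dependent bookkeeping with $\lVert V\rVert$ and $\lVert V^{-1}\rVert$ yielding the same $c_V$, and the same similarity/essential-range argument for the final claim. The only (cosmetic) difference is that the paper factors the argument through an intermediate lemma on $\tilde\lambda$-convexity in the Hilbert norm via monotonicity of $D\F$, whereas you bound the second derivative of the parabola $s\mapsto\F(\gamma(s))$ directly.
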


We give a proof of this theorem in Section~\ref{sec: geodesic conv}. After the two proof sections we conclude with a discussion in Section~\ref{sec:discussion}.

\vspace{1cm}

\section{Gradient systems and diagonalisability}
\label{sec:gs and diag}

This section is devoted our main Theorem~\ref{th:main result}, which is proven in three separate lemmas.

\begin{lemma} 
Statements \eqref{th it: ggs}, \eqref{th it: gs} and \eqref{th it: can gs} from Theorem~\ref{th:main result} are equivalent.
\end{lemma}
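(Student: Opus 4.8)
The plan is to establish the cycle (iii) $\implies$ (ii) $\implies$ (i) $\implies$ (iii). The first two implications amount to recognising each structure as a special case of the next. For (iii) $\implies$ (ii): a constant positive definite symmetric operator $K$ is an Onsager operator (its Gâteaux derivative vanishes, hence is continuous), a quadratic functional $\F(x)=\tfrac12\langle B(x-\pi),x-\pi\rangle_H$ with $B$ bounded and symmetric is a free energy with $D\F(\pi)=B(\pi-\pi)=0$, and the identity $Ax=-KD\F(x)$ is untouched, so $(H,K,\F)$ is a gradient system with equilibrium $\pi$. For (ii) $\implies$ (i): given $(H,K,\F)$, put $\Psi^*(x,\xi):=\tfrac12\langle\xi,K(x)\xi\rangle_H$; positive definiteness of $K(x)$ makes $\Psi^*(x,\cdot)$ strictly convex with $\inf_{\xi}\Psi^*(x,\xi)=\Psi^*(x,0)=0$, the regularity assumed of $K$ carries over to $\Psi^*$ (which is moreover polynomial in $\xi$), and since $D_\xi\Psi^*(x,\xi)=K(x)\xi$ we get $D_\xi\Psi^*(x,-D\F(x))=-K(x)D\F(x)=Ax$, keeping the same $\F$ and $\pi$.

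The content of the lemma is the implication (i) $\implies$ (iii), and the idea is to linearise the generalised gradient-flow relation $Ax=D_\xi\Psi^*(x,-D\F(x))$ at the equilibrium $\pi$. Since $\xi=0$ minimises $\Psi^*(x,\cdot)$ for every $x$, we have the identity $D_\xi\Psi^*(x,0)=0$ for all $x\in H$; in particular, evaluating the flow relation at $x=\pi$ and using $D\F(\pi)=0$ gives $A\pi=D_\xi\Psi^*(\pi,0)=0$, so $\pi$ is an equilibrium of $\dot x=Ax$. Writing $g(x):=D_\xi\Psi^*(x,-D\F(x))=Ax$ and differentiating at $x=\pi$ in an arbitrary direction $h$, the chain rule together with $D\F(\pi)=0$ yields
\begin{equation*}
  Ah = g'(\pi)h = D_xD_\xi\Psi^*(\pi,0)\,h \;-\; D_\xi^2\Psi^*(\pi,0)\,D^2\F(\pi)\,h .
\end{equation*}
The first term on the right vanishes, being the $x$-derivative of the map $x\mapsto D_\xi\Psi^*(x,0)$, which is identically $0$. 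Hence $A=-KB$ with the constant operators $K:=D_\xi^2\Psi^*(\pi,0)$ and $B:=D^2\F(\pi)$. Taking the candidate canonical gradient system $(H,K,\widetilde\F)$ with $\widetilde\F(x):=\tfrac12\langle B(x-\pi),x-\pi\rangle_H$, one has $D\widetilde\F(\pi)=0$ and $-K D\widetilde\F(x)=-KB(x-\pi)=A(x-\pi)=Ax-A\pi=Ax$, as required.

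What remains — and where I expect the main obstacle — is verifying that $K$ and $B$ meet the definitions. Symmetry of $B$ and of $K$ is symmetry of second Gâteaux derivatives, and $B$ is a bounded operator by continuity of $D^2\F$, so $\widetilde\F$ is a genuine quadratic free energy. Positive semidefiniteness of $K=D_\xi^2\Psi^*(\pi,0)$ follows from $0$ being a minimiser of $\Psi^*(\pi,\cdot)$, but upgrading this to positive definiteness is the delicate point, since strict convexity of $\Psi^*(\pi,\cdot)$ by itself does not bound the Hessian at the minimum from below (as $\xi\mapsto\xi^4$ shows). This is exactly where the standing surjectivity hypothesis on $A$ enters: from $A=-KB$ surjective we get $\Ran K=H$, and a surjective self-adjoint operator has $\ker K=(\Ran K)^\perp=\{0\}$, hence is boundedly invertible by the open mapping theorem; for a positive semidefinite self-adjoint $K$ this forces $\langle\xi,K\xi\rangle_H=\|K^{1/2}\xi\|_H^2\ge c\|\xi\|_H^2$ with $c>0$, so $K$ is positive definite. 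A secondary technicality is justifying the linearisation in the Gâteaux (rather than Fréchet) setting, including the step $D_xD_\xi\Psi^*(\pi,0)=0$; this is handled by expanding $g(\pi+th)$ directly to first order in $t$, using $-D\F(\pi+th)=-tBh+o(t)$ and the fact that the remaining mixed increment is $o(t)$ under the assumed joint regularity of $\Psi^*$, rather than invoking an abstract chain rule.
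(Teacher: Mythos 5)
Your proof is correct and follows essentially the same route as the paper: the implications \emph{(iii)} $\implies$ \emph{(ii)} $\implies$ \emph{(i)} are immediate specialisations, and \emph{(i)} $\implies$ \emph{(iii)} is obtained by linearising $Ax=D_\xi\Psi^*(x,-D\F(x))$ at the equilibrium $\pi$, using $D\F(\pi)=0$ and $D_\xi\Psi^*(\cdot,0)\equiv 0$ to arrive at $A=-K B$ with $K:=D^2_\xi\Psi^*(\pi,0)$ and $B:=D^2\F(\pi)$, exactly as in the paper. The one place you go beyond the paper is worth noting: the paper asserts positive definiteness of $K$ directly from strict convexity of $\Psi^*(\pi,\cdot)$, which (as your $\xi\mapsto\xi^4$ example shows) only yields positive semidefiniteness of the Hessian at the minimiser, whereas your additional argument --- surjectivity of $A=-KB$ forces the symmetric $K$ to be surjective, hence boundedly invertible and therefore coercive --- cleanly closes this gap using the standing surjectivity hypothesis.
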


\begin{proof}
Clearly a canonical gradient system $(H,K,\F)$ is also a gradient system, which in turn is a generalised gradient system if we define $\Psi^*(x,\xi):=\tfrac12 \langle \xi,K\xi\rangle$. Hence, we only need to show \eqref{th it: ggs} $\implies$ \eqref{th it: can gs}.

Differentiating $Ax=D_\xi\Psi^*(x,-D\F(x))$ in the equilibrium point $\pi \in H$ and using that $A$ is linear yield:
\begin{align*}
  A &= D_x D_\xi \Psi^*\big(\pi,-D\F(\pi)\big) - D^2\F(\pi) D^2_\xi\Psi^*\big(\pi,-D\F(\pi)\big)\\
    &= -D^2_\xi\Psi^*\big(\pi,0\big)D^2\F(\pi),
\end{align*}
since $D\F(\pi)=0$ and $D_\xi\Psi^*(x,0)\equiv0$. Define the canonical gradient system by
\begin{align*}
  \hat\F(x):=\mfrac12\langle D^2\F(\pi) x,x\rangle_H &&\text{and}&& \hat K:=D^2_\xi\Psi^*(\pi,0),
\end{align*}
the latter being positive definite since $\Psi^*$ is assumed to be strictly convex. It follows that for all $x\in H$,
\begin{align*}
  Ax= -D^2_\xi\Psi^*\big(\pi,0\big)D^2\F(\pi)x= -\hat K D\hat F(x).
\end{align*}
\end{proof}

\begin{lemma}
Statement \eqref{th it: real diag} implies \eqref{th it: can gs} in Theorem~\ref{th:main result}.
\label{lem:diag implies gs}
\end{lemma}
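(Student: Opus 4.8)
The plan is to verify directly that the explicit construction in \eqref{eq:construction FK} defines a canonical gradient system whose flow is \eqref{eq:linear evolution}. So, assuming $A = V^{-1}M_f V$ with $V:H\to L^2_\mu(\Omega)$ invertible and bounded and $f\in L^\infty_\mu(\Omega)$ real-valued, I would set
\begin{align*}
  K := V^{-1} V\mtp \qquad\text{and}\qquad \F(x) := -\mfrac12\,(Vx, M_f V x)_{L^2_\mu(\Omega)},
\end{align*}
and then check three things: (a) $K$ is a bounded, symmetric, positive definite linear operator on $H$ (so it is an admissible, $x$-independent Onsager operator); (b) $\F$ is of the required quadratic form $\tfrac12\langle B(x-\pi), x-\pi\rangle_H$ for a symmetric bounded $B$ and some equilibrium $\pi$ with $D\F(\pi)=0$; and (c) the identity $Ax = -KD\F(x)$ holds for all $x\in H$.

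For (a): symmetry of $K = V^{-1}V\mtp$ is immediate since $(V^{-1}V\mtp)\tp = V^{-1}(V^{-1})\tp\,\tp\dots$ — more carefully, $K\tp = (V\mtp)\tp (V^{-1})\tp = V^{-1}V\mtp = K$, using that $(V\mtp)\tp = V^{-1}$. Boundedness is clear from boundedness of $V^{-1}$ and hence of $V\mtp = (V^{-1})\tp$. Positive definiteness: for $x\neq 0$, $\langle Kx, x\rangle_H = \langle V\mtp x, (V^{-1})\tp{}^{-1}\dots$ — cleaner is to write $\langle V^{-1}V\mtp x, x\rangle_H = (V\mtp x, V\mtp x)_{L^2_\mu(\Omega)} = \|V\mtp x\|^2 > 0$ since $V\mtp$ is injective. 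For (b): $D\F(x) = -V\tp M_f V x$ (using self-adjointness of $M_f$, which holds because $f$ is real-valued), so $B = -V\tp M_f V$ is symmetric and bounded, and $\F(x) = \tfrac12\langle Bx, x\rangle_H$. To exhibit an equilibrium $\pi$ with $D\F(\pi) = B\pi = 0$: since $V$ is invertible, $B\pi = 0 \iff M_f V\pi = 0$; one may simply take $\pi = 0$, which always works, or, if a nonzero equilibrium is desired, any $\pi = V^{-1}\psi$ with $\psi$ supported where $f = 0$. Either way an equilibrium exists.

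The heart of the matter — step (c) — is a short computation: for all $x\in H$,
\begin{align*}
  -KD\F(x) = -\big(V^{-1}V\mtp\big)\big(-V\tp M_f V x\big) = V^{-1}V\mtp V\tp M_f V x = V^{-1} M_f V x = Ax,
\end{align*}
where I used $V\mtp V\tp = (V^{-1})\tp V\tp = (V V^{-1})\tp = I$. This is really the only place where the specific form of the construction is used, and it goes through cleanly.

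I do not anticipate a serious obstacle; the main points needing care are bookkeeping with adjoints and inverses (keeping straight that $V\mtp$ denotes both the adjoint of $V^{-1}$ and the inverse of $V\tp$, and that these coincide), and confirming that $M_f$ is self-adjoint precisely because $f$ is \emph{real}-valued (this is where real diagonalisability, as opposed to mere diagonalisability, enters). One should also note that the differentiability hypotheses on $K$ and $\F$ demanded for a gradient system are trivially met here, since $K$ is constant and $\F$ is a bounded quadratic form, hence smooth. Finally, it is worth recording that $\F$ need not be bounded below or convex — $f$ can be positive somewhere — so this construction genuinely produces gradient systems with non-convex energies, which is consistent with the later geodesic $\lambda$-convexity statement allowing $\lambda < 0$.
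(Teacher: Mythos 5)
Your proof is correct and follows essentially the same route as the paper: verify directly that the explicit construction \eqref{eq:construction FK} gives a canonical gradient system, using $(V\mtp)\tp=V^{-1}$ for symmetry and positive definiteness of $K$, self-adjointness of $M_f$ (from $f$ real-valued) for $D\F(x)=-V\tp M_fVx$, and the cancellation $V\mtp V\tp=I$ for the identity $Ax=-KD\F(x)$. Your added remarks on the equilibrium $\pi=0$ and on the differentiability hypotheses are fine and only make explicit what the paper leaves implicit.
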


\begin{proof}
Let $A$ be bounded, linear and real diagonalisable, see Theorem~\ref{th:main result}, and define $\F$ and $K$ by \eqref{eq:construction FK}.
It is easily seen that $(H, K, \F)$ is a canonical gradient system. Indeed, $ K$ is independent of $x$ and therefore smooth, it is symmetric due to $ K\tp = {\big(V^{-1}V\mtp\big)}\tp =  K$, and positive definite because:
\begin{align*}
  \langle \xi,  K\xi \rangle_H = \langle \xi, V^{-1}V\mtp \xi \rangle_H =  (V\mtp \xi, V\mtp \xi )_{L^2_\mu(\Omega)} > 0 &&\text{for all } 0\neq \xi\in H.
\end{align*}

Further, $\F$ is quadratic since $M_f$ is self-adjoint whenever $f$ is real-valued.
In particular we get $D \F(x) = -V\mtp M_f V x \in H$. Now we have
\begin{align*}
Ax = V^{-1} M_f V = V^{-1} V\mtp V\tp M_f V x = - K D  \F (x),
\end{align*}
which was to be demonstrated.  
\end{proof}

\begin{lemma}
Statement \eqref{th it: can gs} implies \eqref{th it: real diag} in Theorem~\ref{th:main result}.
\end{lemma}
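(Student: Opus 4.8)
The plan is to reduce statement \eqref{th it: can gs} to the classical multiplication-operator form of the spectral theorem for bounded self-adjoint operators. Write $\F(x)=\tfrac12\langle B(x-\pi),x-\pi\rangle_H$ as in the definition of a canonical gradient system, so that $D\F(x)=B(x-\pi)$ and the identity $Ax=-KD\F(x)$ becomes $Ax=-KB(x-\pi)=-KBx+KB\pi$. Since the left-hand side is linear in $x$, the constant contribution $KB\pi$ must vanish, and hence $A=-KB$ as bounded operators (in passing, $A\pi=0$). Everything thus reduces to showing: if $K$ is positive definite, symmetric and bounded, $B$ is symmetric and bounded, and $A=-KB$ is surjective, then $A$ is real diagonalisable in the sense of \eqref{th it: real diag}.

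The first step is to establish that $K$ is boundedly invertible; this is where surjectivity of $A$ enters, and I expect it to be the one genuinely non-routine point. Positive definiteness gives $\ker K=\{0\}$, so $K$ is injective; and $\Ran(A)=\Ran(KB)\subseteq\Ran(K)$ forces $K$ to be surjective. A bijective bounded operator on a Banach space has a bounded inverse (bounded inverse theorem), so $K^{-1}\in L(H,H)$; equivalently $K\geq c\,\mathrm{Id}$ for some $c>0$. Without surjectivity of $A$ this would fail in general (take $K$ compact, $B=\mathrm{Id}$), which explains why the hypothesis is used. Consequently the positive square root $K^{1/2}$ is bounded with bounded inverse $K^{-1/2}$.

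Next I would conjugate $A$ into self-adjoint form. Set $S:=-K^{1/2}BK^{1/2}\in L(H,H)$, which is self-adjoint because $B$ is symmetric and $K^{1/2}$ is self-adjoint. A direct computation gives $K^{-1/2}AK^{1/2}=K^{-1/2}(-KB)K^{1/2}=-K^{1/2}BK^{1/2}=S$, i.e.\ $A=K^{1/2}SK^{-1/2}$. (Equivalently, and avoiding square roots, $A=-KB$ is self-adjoint for the equivalent inner product $\langle x,y\rangle_K:=\langle K^{-1}x,y\rangle_H$.) Now invoke the spectral theorem in its multiplication-operator form for the bounded self-adjoint operator $S$ on the separable Hilbert space $H$: there exist a measure space $(\Omega,\mathcal{A},\mu)$ (which may be taken finite), a real-valued $f\in L^\infty_\mu(\Omega)$ with multiplication operator $M_f$, and a unitary $U:H\to L^2_\mu(\Omega)$ such that $S=U^{-1}M_fU$.

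Finally I would assemble $V$. Substituting, $A=K^{1/2}U^{-1}M_fUK^{-1/2}=\big(UK^{-1/2}\big)^{-1}M_f\big(UK^{-1/2}\big)$, so $V:=UK^{-1/2}:H\to L^2_\mu(\Omega)$ does the job: it is bounded, being the composition of the bounded $K^{-1/2}$ with the unitary $U$, and it is invertible with bounded inverse $V^{-1}=K^{1/2}U^{-1}$. Thus $A=V^{-1}M_fV$ with $f$ real-valued, which is exactly statement \eqref{th it: real diag}; note that $V$ is genuinely non-unitary in general, consistent with the footnote. Beyond bookkeeping, the only steps requiring care are the bounded invertibility of $K$ and the appeal to the measure-theoretic (rather than projection-valued) version of the spectral theorem.
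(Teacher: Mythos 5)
Your proposal is correct and follows essentially the same route as the paper: show that the square root of $K$ is boundedly invertible (using surjectivity of $A$ for surjectivity of $K$, resp.\ $\sqrt{K}$), conjugate $A$ by $K^{1/2}$ to obtain the symmetric operator $-K^{1/2}BK^{1/2}$, apply the multiplication-operator form of the spectral theorem, and set $V=UK^{-1/2}$. The only cosmetic difference is that you invert $K$ first and then take its square root, whereas the paper takes the square root first and inverts that; your explicit treatment of the affine term $KB\pi$ is a small extra care the paper skips by taking $\pi=0$.
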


\begin{proof}
  We are given a canonical gradient system $(H,K,\F)$ with $\F(x)=\frac12 \langle Bx,x\rangle_H$ so that $Ax=- K D\F(x)=- K Bx$ for all $x\in H$. Since $ K$ is symmetric positive definite it has a unique symmetric positive definite bounded square root $\sqrt{K}:H\to H$ such that $K=\sqrt{K}\tp\sqrt{K}=\sqrt{K}\sqrt{K}$ \cite[VII.104, Th.~4]{RieszNagy1953}.

We show that this linear operator $\sqrt{K}$ is invertible. For the injectivity, pick an arbitrary $x\in H$ for which $\sqrt{K}x=0$. Then
\begin{align*}
  \langle K x,x\rangle_H= \Big\langle \sqrt{K} x,\sqrt{K} x\Big\rangle_H = 0,
\end{align*}
and so $x=0$ by the positive definiteness of $K$. For the surjectivity, note that $A=-\sqrt{K}\sqrt{K} B$, and so by the assumed surjectivity of $A$ we have $H=\Ran(A)\subset\Ran(\sqrt{K})\subset H$.

Define the following two bounded linear operators:
\begin{align*}
  \bar A:=\sqrt{K}^{-1} A \sqrt{K}
  &&\text{and}&&
  \bar B:=\sqrt{K} B \sqrt{K}.
\end{align*}
Clearly, $\bar B$ is symmetric since $B$ is symmetric. From $A=-KB$ it follows that $\bar A=-\bar B$, and so $\bar A$ is also symmetric.

We now invoke the spectral theorem for symmetric bounded linear operators \cite[Th.~3.11]{Kubrusly2012} and obtain the existence of a finite measure space $(\Omega, \mathcal{A}, \mu)$, a unitary operator $U:H\to L^2_\mu(\Omega)$ and a real-valued $f\in L^\infty_\mu(\Omega)$ with corresponding multiplication operator $M_f$ such that $\bar A=U\tp M_f U$. After defining the invertible bounded linear operator $V:=U\sqrt{K}^{-1}:H\to L^2_\mu(\Omega)$, we conclude that
\begin{align*}
  A= \sqrt{K} \bar A \sqrt{K}^{-1}= V^{-1} M_f V.
\end{align*}
\end{proof}

\begin{remark}
Our arguments imply that $A$ is complex diagonalisable if and only if the linear equation can be written as a ``complex-valued gradient flow''. It is unclear to us whether such gradient structures have any practical relevance.
\end{remark}

\section{Geodesic $\lambda$-convexity}
\label{sec: geodesic conv}

The aim of this section is to prove Theorem \ref{th:geodesic convexity}: the geodesic $\lambda$-convexity of the function $\F$ with respect to the metric $d $. Throughout this section we assume that $A$ is surjective, bounded, linear and real diagonalisable, and $(H,K,\F)$ is the corresponding canonical gradient system~\eqref{eq:construction FK}. It follows from the construction that the inverse $G:=K^{-1}:H\to H$ exists, and so  
\begin{align}
  d (x_1,x_2)^2 = \inf_{\gamma\in\Gamma(x_1,x_2)} \left\lbrace \int_0^1\! \big\langle G\dot\gamma(s),\dot\gamma(s)\big\rangle_H\,\mathrm{d}s\right\rbrace.
\label{eq: def2 of d_K}
\end{align}
Recall~\cite{daneri2010lecture} that geodesic $\lambda$-convexity means that the mapping $s \mapsto \F(\gamma(s))$ is $\lambda$-convex for all arc-length parametrised (constant speed) geodesics $\gamma:[0,1] \to H$ , i.e. for all $\theta \in [0,1]$ we have
\begin{equation*}
  \F(\gamma(\theta)) \leq (1-\theta)\F(\gamma(0)) + \theta\F(\gamma(1))- \lambda\frac{\theta(1-\theta)}{2}d (\gamma(0),\gamma(1))^2.
\end{equation*}

Before we prove this inequality, we show the following lemma. For brevity we shall omit the indices from the norms
$\lVert\cdot\rVert_{L( L^2_\mu(\Omega),H)}$ and $\lVert\cdot\rVert_{L(H,  L^2_\mu(\Omega))}$.

\begin{lemma} The free energy functional $\F$ from~\eqref{eq:construction FK} is $\tilde\lambda$-convex in the Hilbert space $H$, where
\begin{align*}
  \tilde\lambda := -\esssup_{\omega \in \Omega} f(\omega) \, \tilde{c_V},
  &&\text{and}&&
  \tilde{c_V}:=\begin{cases}
    \Vert V \Vert^2,          &\text{if } \esssup_{\omega \in \Omega} f(\omega) \geq 0,\\
    \Vert V^{-1} \Vert^{-2},  &\text{otherwise.}
  \end{cases}
\end{align*}
\end{lemma}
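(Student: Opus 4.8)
The plan is to reduce the claim that $\F(x)=-\tfrac12(Vx,M_fVx)_{L^2_\mu(\Omega)}$ is $\tilde\lambda$-convex in the linear-space sense to a lower bound on its second Gâteaux derivative. Since $\F$ is quadratic, we have $D^2\F(x)[h,h] = -(Vh, M_f Vh)_{L^2_\mu(\Omega)}$ for every $h\in H$, and $\tilde\lambda$-convexity of a $C^2$ quadratic functional on a Hilbert space is equivalent to $D^2\F(x)[h,h]\ge \tilde\lambda\,\lVert h\rVert_H^2$ for all $h$. So the whole lemma comes down to proving the pointwise-in-$h$ estimate
\begin{equation*}
  -(Vh, M_f Vh)_{L^2_\mu(\Omega)} \ \ge\ -\Big(\esssup_{\omega\in\Omega} f(\omega)\Big)\, \tilde c_V\, \lVert h\rVert_H^2 .
\end{equation*}

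First I would bound the quadratic form by the essential supremum of $f$: writing $s_f:=\esssup_\omega f(\omega)$, one has $(Vh, M_f Vh)_{L^2_\mu} = \int_\Omega f\,|Vh|^2\,\mathrm{d}\mu \le s_f \int_\Omega |Vh|^2\,\mathrm{d}\mu = s_f\,\lVert Vh\rVert_{L^2_\mu}^2$ \emph{provided} $s_f\ge 0$; if $s_f<0$ then $f\le s_f<0$ a.e. and the same inequality $\int f|Vh|^2 \le s_f\lVert Vh\rVert^2$ still holds, just now with a negative right-hand side. So in both cases $-(Vh,M_fVh)_{L^2_\mu}\ge -s_f\lVert Vh\rVert_{L^2_\mu}^2$. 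Second, I would relate $\lVert Vh\rVert_{L^2_\mu}^2$ to $\lVert h\rVert_H^2$ depending on the sign of $-s_f$: when $s_f\ge 0$ we need an \emph{upper} bound $\lVert Vh\rVert_{L^2_\mu}^2 \le \lVert V\rVert^2\lVert h\rVert_H^2$, giving $-(Vh,M_fVh)\ge -s_f\lVert V\rVert^2\lVert h\rVert_H^2$; when $s_f<0$ the factor $-s_f$ is positive so we need a \emph{lower} bound $\lVert Vh\rVert_{L^2_\mu}^2 \ge \lVert V^{-1}\rVert^{-2}\lVert h\rVert_H^2$, which follows from $\lVert h\rVert_H = \lVert V^{-1}Vh\rVert_H \le \lVert V^{-1}\rVert\,\lVert Vh\rVert_{L^2_\mu}$. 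In each case this is exactly $\tilde\lambda\lVert h\rVert_H^2$ with the stated $\tilde c_V$, and integrating the resulting lower bound on $D^2\F$ along segments $x+t h$ yields the $\tilde\lambda$-convexity inequality.

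The only genuinely delicate point is the case distinction driven by the sign of $\esssup_\omega f$, since the direction of the norm-comparison inequality between $\lVert V\cdot\rVert_{L^2_\mu}$ and $\lVert\cdot\rVert_H$ must flip to keep the bound on $D^2\F$ pointing the right way; one must be careful that when $s_f<0$ the a.e.\ bound $f\le s_f$ is used (not merely $f\le s_f^+$), and that $\tilde c_V$ is then $\lVert V^{-1}\rVert^{-2}$, not $\lVert V\rVert^2$. Everything else — that $\F$ is $C^2$ and quadratic, that $D^2\F(x)$ is $x$-independent, and the equivalence between $\lambda$-convexity and the Hessian lower bound — is routine and can be invoked directly. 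I would also remark that this linear-space $\tilde\lambda$-convexity is a stepping stone: in the proof of Theorem~\ref{th:geodesic convexity} it will be upgraded to geodesic $\lambda$-convexity with respect to $d$ by comparing the metric $d$ with the flat Hilbert norm via the operator $G=K^{-1}$, which is why the constant there acquires the extra factors $\lVert V^{-1}\rVert^2$ or $\lVert V\rVert^{-2}$ relative to $\tilde c_V$.
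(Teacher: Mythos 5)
Your proposal is correct and follows essentially the same route as the paper: the paper verifies the first-order monotonicity condition $\langle D\F(x_1)-D\F(x_2),x_1-x_2\rangle_H\ge\tilde\lambda\lVert x_1-x_2\rVert_H^2$, which for the quadratic $\F$ is literally the same inequality as your Hessian bound $D^2\F(x)[h,h]\ge\tilde\lambda\lVert h\rVert_H^2$ with $h=x_1-x_2$, and the two key estimates (bounding $\int f\,\lvert Vh\rvert^2\,\mathrm{d}\mu$ by $\esssup f\cdot\lVert Vh\rVert^2$, then comparing $\lVert Vh\rVert$ with $\lVert h\rVert$ in the direction dictated by the sign of $\esssup f$) are identical. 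If anything, you make the sign-dependent case distinction more explicit than the paper's displayed chain of inequalities does.
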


\begin{proof}
It holds
\begin{align*}
\langle & D\F(x_1)- D\F(x_2), x_1-x_2\rangle_H =  - \langle  V^T M_f V(x_1 -x_2)  , x_1-x_2\rangle_ H \\
&= - \big(  M_f V(x_1 -x_2)  , V(x_1-x_2) \big)_{L^2_\mu(\Omega)} = - \int_{\Omega} f(\omega) \big(V(x-y)(\omega) \big)^2 d\mu(\omega) \\
& \geq -\esssup_{\omega \in \Omega} f(\omega) \Vert V(x_1-x_2) \Vert_{L^2_\mu(\Omega)}^2 \geq -\esssup_{\omega \in \Omega} f(\omega) \, \tilde{c_V} \, \Vert x_1-x_2 \Vert_H^2,
\end{align*}
which is equivalent to $\tilde\lambda$-convexity since $\F$ is G\^ateaux-differentiable (see \cite[Prop. 3.12]{Peypouquet2015}).
\end{proof}

Next, we show that in our case geodesic $\lambda$-convexity can be obtained from $\tilde\lambda$-convexity in $H$. More precisely, we have the following lemma:
\begin{lemma}
Let $d $ be the metric~\eqref{eq: def2 of d_K} with metric tensor $G(x):=K^{-1}(x)= V\tp V$, and let $\F:H \to \RR$ be a $\tilde\lambda$-convex functional in $H$. Then $\F$ is geodesically $\lambda$-convex with respect to the metric $d $, where
\begin{equation*}
\lambda := \left\{\begin{array}{ll} \tilde\lambda \Vert V \Vert^{-2}, & \text{if $\tilde\lambda > 0$,}  \\
          \tilde\lambda \Vert V^{-1} \Vert^2, & \text{if $\tilde\lambda \leq 0$.} \end{array}\right.
\end{equation*}
\end{lemma}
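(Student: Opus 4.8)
The plan is to exploit the fact that the metric tensor $G = V\tp V$ appearing in~\eqref{eq: def2 of d_K} does not depend on $x$, so that $d$ is a flat metric. First I would make this precise. Equipping $H$ with the equivalent inner product $(x,y)_G := \langle Gx,y\rangle_H = (Vx,Vy)_{L^2_\mu(\Omega)}$, for any curve $\gamma\in\Gamma(x_1,x_2)$ the Cauchy--Schwarz inequality together with the fundamental theorem of calculus yields
\[
  \int_0^1\! \langle G\dot\gamma(s),\dot\gamma(s)\rangle_H\,\mathrm{d}s
  \;\ge\; \left(\int_0^1\! \Vert V\dot\gamma(s)\Vert_{L^2_\mu(\Omega)}\,\mathrm{d}s\right)^{2}
  \;\ge\; \Vert V(x_1-x_2)\Vert_{L^2_\mu(\Omega)}^{2},
\]
with equality for the affine segment $\gamma(s) = (1-s)x_1 + s x_2$. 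Hence $d(x_1,x_2)^2 = \Vert V(x_1-x_2)\Vert_{L^2_\mu(\Omega)}^2 = \langle G(x_1-x_2),x_1-x_2\rangle_H$, the map $V$ is an isometry of $(H,d)$ onto the Hilbert space $L^2_\mu(\Omega)$, and therefore the constant-speed geodesics of $(H,d)$ joining $x_1$ and $x_2$ are precisely the affine segments $\theta\mapsto(1-\theta)x_1+\theta x_2$.

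With this identification, unwinding the definition of geodesic $\lambda$-convexity recalled above, the claim becomes: for all $x_1,x_2\in H$ and all $\theta\in[0,1]$,
\[
  \F\big((1-\theta)x_1+\theta x_2\big)\;\le\;(1-\theta)\F(x_1)+\theta\F(x_2)-\lambda\,\tfrac{\theta(1-\theta)}{2}\,\Vert V(x_1-x_2)\Vert_{L^2_\mu(\Omega)}^2 .
\]
By hypothesis $\F$ is $\tilde\lambda$-convex on $H$, i.e. this inequality already holds with $\tilde\lambda$ and $\Vert x_1-x_2\Vert_H^2$ in place of $\lambda$ and $\Vert V(x_1-x_2)\Vert_{L^2_\mu(\Omega)}^2$. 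Since $\tfrac{\theta(1-\theta)}{2}\ge0$, it therefore suffices to prove the scalar estimate $\lambda\,\Vert Vz\Vert_{L^2_\mu(\Omega)}^2\le\tilde\lambda\,\Vert z\Vert_H^2$ for every $z:=x_1-x_2\in H$.

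This last estimate I would check by splitting on the sign of $\tilde\lambda$, exactly as $\lambda$ is defined in the statement. If $\tilde\lambda>0$, then $\lambda=\tilde\lambda\Vert V\Vert^{-2}>0$, and $\Vert Vz\Vert_{L^2_\mu(\Omega)}^2\le\Vert V\Vert^2\Vert z\Vert_H^2$ gives $\lambda\Vert Vz\Vert_{L^2_\mu(\Omega)}^2\le\tilde\lambda\Vert z\Vert_H^2$. If $\tilde\lambda\le0$, then $\lambda=\tilde\lambda\Vert V^{-1}\Vert^2\le0$, and writing $z=V^{-1}(Vz)$ gives $\Vert z\Vert_H^2\le\Vert V^{-1}\Vert^2\Vert Vz\Vert_{L^2_\mu(\Omega)}^2$; multiplying through by $\tilde\lambda\le0$ reverses the inequality to $\tilde\lambda\Vert z\Vert_H^2\ge\lambda\Vert Vz\Vert_{L^2_\mu(\Omega)}^2$, which is the claim (and for $\tilde\lambda=0$ both $\lambda$ and both sides vanish). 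The only step with genuine content is the first one: recognising that the constancy of $G$ makes $d$ a flat metric and pinning down its geodesics; everything else reduces to the two one-line norm comparisons above. I expect no real obstacle beyond being careful to match the Cauchy--Schwarz lower bound and the affine-segment upper bound with whichever equivalent definition of constant-speed geodesic is in force.
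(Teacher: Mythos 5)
Your proof is correct and follows essentially the same route as the paper: identify the metric as flat with $d(x_1,x_2)^2=\Vert V(x_1-x_2)\Vert_{L^2_\mu(\Omega)}^2$, take affine segments as the geodesics, and convert $\tilde\lambda$-convexity in $H$ into geodesic $\lambda$-convexity via the two norm comparisons $\Vert Vz\Vert\leq\Vert V\Vert\,\Vert z\Vert$ and $\Vert z\Vert\leq\Vert V^{-1}\Vert\,\Vert Vz\Vert$, split on the sign of $\tilde\lambda$ exactly as in the paper. Your Cauchy--Schwarz argument pinning down the geodesics is in fact slightly more detailed than the paper's, which simply asserts that constancy of the metric tensor makes the space flat.
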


\begin{proof}
Let $x_1,x_2 \in H$ be arbitrary and let $\gamma \in \Gamma(x_1,x_2)$ be a geodesic connecting $x_1$ and $x_2$. As $K^{-1}$ is independent of $x$, the metric space is flat, and the geodesic between $x_1$ and $x_2$ is a straight line $\gamma(s) = (1-s)x_1 +s x_2$. Therefore
\begin{align*}
  d (x_1,x_2)^2 &\stackrel{\eqref{eq: def2 of d_K}}{=}  \int_0^1\! \big\langle V\tp V \dot\gamma(s),\dot\gamma(s)\big\rangle_H\,\mathrm{d}s
=  \Vert V(x_2-x_1) \Vert^2_{L^2_\mu}.
\end{align*}
This yields the estimates
\begin{equation}
  \frac1{\Vert V^{-1} \Vert} \Vert x_2-x_1 \Vert_H \leq d (x_1,x_2) \leq \Vert V \Vert \Vert x_2-x_1 \Vert_H.
\label{eq:metric bounds}
\end{equation}

First consider the case $\tilde\lambda \leq 0$. Exploiting the $\tilde\lambda$-convexity of $\F$, we can bound for arbitrary $\theta \in [0,1]$,
\begin{align*}
  \F(\gamma(\theta)) &= \F((1-\theta) x_1+\theta x_2) \\
    &\leq  (1-\theta)\F(x_1)+ \theta \F(x_2) - \frac{\tilde\lambda (1- \theta)\theta }{2} \Vert x_2 - x_1 \Vert_H^2\\
    &\!\!\!\stackrel{\eqref{eq:metric bounds}}{\leq}  (1-\theta)\F(x_1)+ \theta \F(x_2) - \frac{\tilde\lambda  \Vert V^{-1} \Vert^2 (1- \theta)\theta }{2}  d (x_1,x_2)^2 \\
    &=  (1-\theta)\F(\gamma(0))+ \theta \F(\gamma(1)) - \frac{\lambda (1- \theta)\theta }{2}  d (\gamma(0),\gamma(1))^2.
\end{align*}
Similarly for $\tilde\lambda > 0$,
\begin{align*}
  \F(\gamma(\theta)) 
    &\!\!\!\stackrel{\eqref{eq:metric bounds}}{\leq}  (1-\theta)\F(x_1)+ \theta \F(x_2) - \frac{\tilde\lambda (1- \theta)\theta }{2\Vert V \Vert^{2}}  d (x_1,x_2)^2 \\
    &=  (1-\theta)\F(\gamma(0))+ \theta \F(\gamma(1)) - \frac{\lambda (1- \theta)\theta }{2}  d (\gamma(0),\gamma(1))^2.
\end{align*}
\end{proof}

Putting the two lemmas together, we proved the first part of Theorem \ref{th:geodesic convexity}, the geodesically $\lambda$-convexity of $\F$ in the metric space $(H,d )$. The last claim is stated in the following lemma:

\begin{lemma}
Under the additional assumption that the measure space $(\Omega,\mathcal{A},\mu)$ associated to the diagonalisability of $A$ is finite and the spectrum satisfies $\sigma(A) \subset (-\infty, 0]$  
then $f\leq0$ $\mu$-a.e..
\label{lem:nonneg spectrum}
\end{lemma}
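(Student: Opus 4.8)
The plan is to relate the essential range of the multiplication symbol $f$ to the spectrum of the multiplication operator $M_f$, and then transfer this to the spectrum of $A$ via the similarity transformation $A = V^{-1} M_f V$. The key classical fact is that for a multiplication operator $M_f$ on $L^2_\mu(\Omega)$ with $(\Omega,\mathcal{A},\mu)$ a (say $\sigma$-finite) measure space and $f \in L^\infty_\mu(\Omega)$, the spectrum $\sigma(M_f)$ equals the essential range $\essran(f)$, i.e. the set of $\lambda \in \mathbb{C}$ such that $\mu(\{\omega : |f(\omega)-\lambda| < \varepsilon\}) > 0$ for every $\varepsilon > 0$. This is standard (see e.g. \cite[Sec.~2.7]{Kubrusly2012}), and finiteness of $\mu$ certainly suffices. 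Since $f$ is real-valued, $\essran(f) \subset \RR$, so this already tells us $\sigma(M_f) \subset \RR$.

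Next I would observe that similarity preserves the spectrum: because $V$ is an invertible bounded linear operator with bounded inverse, $\lambda I - A = V^{-1}(\lambda I - M_f)V$ is invertible in $L(H)$ if and only if $\lambda I - M_f$ is invertible in $L(L^2_\mu(\Omega))$. Hence $\sigma(A) = \sigma(M_f) = \essran(f)$. Combining with the hypothesis $\sigma(A) \subset (-\infty, 0]$ gives $\essran(f) \subset (-\infty, 0]$.

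Finally I would argue that $\essran(f) \subset (-\infty,0]$ forces $f \leq 0$ $\mu$-almost everywhere. Indeed, suppose for contradiction that $\mu(\{f > 0\}) > 0$. Then $\{f > 0\} = \bigcup_{n} \{f > 1/n\}$, so there is some $n$ with $\mu(\{f > 1/n\}) > 0$; writing $E := \{f \geq 1/n\}$, we have $\mu(E) > 0$ and $\essinf_{E} f \geq 1/n > 0$. Pick any point $c$ in the essential range of $f$ restricted to $E$ — for instance $c = \essinf_E f$, which satisfies $c \geq 1/n > 0$ and $\mu(\{\omega \in E : |f(\omega) - c| < \varepsilon\}) > 0$ for all $\varepsilon > 0$ by definition of essential infimum; hence $c \in \essran(f)$, contradicting $\essran(f) \subset (-\infty, 0]$. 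Therefore $\mu(\{f > 0\}) = 0$, i.e. $f \leq 0$ $\mu$-a.e., which is the claim.

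The only genuinely delicate point is citing or justifying the identity $\sigma(M_f) = \essran(f)$ at the right level of generality; with the finite-measure hypothesis in force this is entirely standard, so I do not expect any real obstacle. One should just be slightly careful that the spectral theorem invoked earlier (\cite[Th.~3.11]{Kubrusly2012}) may a priori only give $f$ real-valued with $\sigma(M_f) \subset \RR$, and that it is the similarity $A = V^{-1} M_f V$ — not unitary equivalence — that transports the spectrum; but as noted above, invertibility of $V$ in $L(H, L^2_\mu(\Omega))$ is all that is needed for $\sigma(A) = \sigma(M_f)$.
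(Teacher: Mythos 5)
Your proposal is correct and follows essentially the same route as the paper: identify $\sigma(M_f)$ with $\essran(f)$ using the finiteness of $\mu$, transfer the spectrum through the similarity $A = V^{-1} M_f V$, and conclude from $\sigma(A)\subset(-\infty,0]$. The only difference is that you explicitly verify the final elementary step that $\essran(f)\subset(-\infty,0]$ forces $f\leq 0$ $\mu$-a.e.\ (via the essential infimum on $\{f\geq 1/n\}$), which the paper leaves implicit; that argument is sound.
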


\begin{proof}
First, as the measure space $(\Omega, \mathcal{A}, \mu)$ is finite, the spectrum of $M_f$ is given by the essential range of $f$ (see \cite[Lemma 4.55]{Knapp2008} for a proof). Second, we use that similar operators have the same spectrum, and obtain
\begin{equation*}
\essran(f) = \sigma(M_f) = \sigma(V^{-1} A V) = \sigma(A) \subset  (-\infty, 0].
\end{equation*}
\end{proof}

\section{Discussion}
\label{sec:discussion}

We end our paper with a discussion about our assumptions and results.

First, our work is restricted to separable Hilbert spaces $H$ because our arguments are based on spectral theory. Further, the assumed boundedness of the operator $A$ can be generalised to a large extent. However, it turns out that for unbounded operators the free energy $\F$ generally fails to be G\^ateaux differentiable. Our assumptions that both $\F$ and $\Psi^*(x,\cdot)$ need to be twice G\^ateaux differentiable can probably be generalised by approximation arguments, but the once differentiability of $\F$ is fundamental in our definition of a gradient flow. This hints that possible workarounds to allow for unbounded operators may be found by using more general definitions of gradient flows that do not require differentiability, like ``curves of maximal slope'' and ``minimising movements'' \cite{AGS2008}. This is beyond the scope of the current paper.

Let us also mention that our restriction to linear equations is essential, again because we couple gradient systems to spectral theory, but also because we use a linearisation around the equilibrium to simplify any (generalised) gradient system to a canonical gradient system. Therefore we expect our results to be approximately true for nonlinear equations near the equilibrium state.  

Finally, the gradient system that we construct for real diagonalisable operators could be perceived as rather unnatural. After all, canonical gradient systems have quadratic free energy and do not allow for entropic expressions. In this regard, we stress that our main Theorem~\ref{th:main result} is an existence result without any claim of uniqueness.

\section*{Acknowledgements}

The research was partially supported by Deutsche Forschungsgemeinschaft (DFG)
via the Collaborative Research Center SFB\,910 ``Control of self-organizing
nonlinear systems'' (project number 163436311), subproject A5 ``Pattern
formation in coupled parabolic systems'' and via the Collaborative Research Center SFB\,1114 ``Scaling Cascades in Complex Systems'', project C08.
We thank Marcus Kaiser for the useful discussions.

\bibliographystyle{alpha}
\bibliography{library} 

\end{document}